\newtheorem{proposition}{Proposition}[section]
\newtheorem{lemma}[proposition]{Lemma}
\newtheorem{corollary}[proposition]{Corollary}
\newtheorem{theorem}[proposition]{Theorem}
\theoremstyle{definition}
\newtheorem{remark}[proposition]{Remark}
\newcommand{\thlabel}[1]{\label{th:#1}}
\newcommand{\thref}[1]{Theorem~\ref{th:#1}}
\newcommand{\selabel}[1]{\label{se:#1}}
\newcommand{\seref}[1]{Section~\ref{se:#1}}
\newcommand{\lelabel}[1]{\label{le:#1}}
\newcommand{\leref}[1]{Lemma~\ref{le:#1}}
\newcommand{\prlabel}[1]{\label{pr:#1}}
\newcommand{\prref}[1]{Proposition~\ref{pr:#1}}
\newcommand{\eqlabel}[1]{\label{eq:#1}}
\newcommand{\equref}[1]{(\ref{eq:#1})}
\newcommand{\Aut}{{\rm Aut}\,}
\def\ot{\otimes}
\newcommand{\Cc}{\mathcal{C}}
\def\*C{{}^*\hspace*{-1pt}{\Cc}}
\def\text#1{{\rm {\rm #1}}}
\begin{document}

\title[Classifying bicrossed products of two Sweedler's Hopf algebras]
{Classifying bicrossed products of two Sweedler's Hopf algebras}

\author{Costel-Gabriel Bontea}
\address{Faculty of Mathematics and Computer Science, University of Bucharest, Str.
Academiei 14, RO-010014 Bucharest 1, Romania}
\address{Faculty of Engineering, Vrije Universiteit Brussel, Pleinlaan 2, B-1050 Brussels, Belgium}
\email{costel.bontea@gmail.com}

\thanks{ This work was supported by a grant of the Romanian National
Authority for Scientific Research, CNCS-UEFISCDI, grant no.
88/05.10.2011.}

\subjclass[2010]{16T10, 16T05, 16S40}

\keywords{bicrossed product of Hopf algebras, Drinfel'd double.}

\begin{abstract}
In this paper we continue the study started recently in
\cite{ABMbp} by describing and classifying all Hopf algebras $E$
that factorize through two Sweedler's Hopf algebras. Equivalently,
we classify all bicrossed products $H_4 \bowtie H_4$. There are
three steps in our approach. First, we explicitly describe the set
of all matched pairs $(H_4, H_4, \triangleright, \triangleleft)$
by proving that, with the exception of the trivial pair, this set
is parameterized by the ground field $k$. Then, for any $\lambda
\in k$, we describe by generators and relations the associated
bicrossed product, $\mathcal{H}_{16, \, \lambda}$. This is a
$16$-dimensional, pointed, unimodular and non-semisimple Hopf
algebra. A Hopf algebra $E$ factorizes through $H_4$ and $H_4$ if
and only if $ E \cong H_4 \ot H_4$ or $E \cong {\mathcal H}_{16,
\, \lambda}$. In the last step we classify these quantum groups by
proving that there are only three isomorphism classes represented
by: $H_4 \ot H_4$, ${\mathcal H}_{16, \, 0}$ and ${\mathcal
H}_{16, \, 1} \cong D(H_4)$, the Drinfel'd double of $H_4$. The
automorphism group of these objects is also computed: in
particular, we prove that $\Aut_{\rm Hopf}\big( D(H_4)\big)$ is
isomorphic to a semidirect product of groups, $k^{\times} \rtimes
\mathbb{Z}_2$.
\end{abstract}
\maketitle

\section*{Introduction}

Let $A$ and $H$ be two given Hopf algebras. The factorization
problem for Hopf algebras consists of classifying up to an
isomorphism all Hopf algebras that factorize through $A$ and $H$,
i.e. all Hopf algebras $E$ containing $A$ and $H$ as Hopf
subalgebras such that the multiplication map $A \ot H \to E$, $a
\ot h \mapsto a h$ is bijective. The problem can be put in more
general terms but we restrict ourselves to the case of Hopf
algebras. For a detailed account on the subject the reader may
consult \cite{ABMbp}.

An important step in dealing with the factorization problem was
made by Majid in \cite[Proposition 3.12]{majid} who generalized to
Hopf algebras the construction of the bicrossed product for groups
introduced by Takeuchi in \cite{Takeuchi}. Although in
\cite{majid} the construction is known under the name of double
cross product, we will follow \cite{Kassel} and call it, just like
in the case of groups, the bicrossed product construction. A
bicrossed product of two Hopf algebras $A$ and $H$ is a new Hopf
algebra $A \bowtie H$ associated to a matched pair $(A, H, \rhd,
\lhd )$ of Hopf algebras. It is proven in \cite[Proposition
3.12]{majid} that a Hopf algebra $E$ factorizes through $A$ and
$H$ if and only if $E$ is isomorphic to some bicrossed product of
$A$ and $H$. Thus, the factorization problem can be stated in a
computational manner: for two Hopf algebras $A$ and $H$ describe
the set of all matched pairs $(A, H, \rhd, \lhd )$ and classify up
to an isomorphism all the bicrossed products $A \bowtie H$. This
way of approaching the problem was recently proposed in
\cite{ABMbp} with promising results regarding new examples of
quantum groups. For example, in \cite[Section 4]{{ABMbp}} all
bicrossed products $H_4 \bowtie k[C_n]$ are described by
generators and relations and are classified. They are quantum
groups at roots of unity $H_{4n, \, \omega}$ which are classified
by the arithmetic of the ring $\mathbb{Z}_n$. In this paper we
continue the study began in \cite{ABMbp} by classifying all Hopf
algebras that factorize through two Sweedler's Hopf algebras.

The paper is organized as follows. In \seref{prel} we set the
notations and recall the bicrossed product construction of two
Hopf algebras. In \seref{H_4H_4}, the main section of this paper,
we classify all Hopf algebras that factorize through two
Sweedler's Hopf algebras. For this, we first compute all the
matched pairs $(H_4, H_4, \rhd, \lhd)$: except the trivial one,
these are parameterized by the ground field $k$, which is an
arbitrary field of characteristic $\neq 2$. We then describe by
generators and relations the associated bicrossed products $H_4
\bowtie H_4$. These are: $H_4 \ot H_4$ and ${\mathcal H}_{16, \,
\lambda}$, where, for any $\lambda \in k$, ${\mathcal H}_{16, \,
\lambda}$ is the $16$-dimensional quantum group generated by $g$,
$x$, $G$, $X$ subject to the relations:
$$
g^2 = G^2 = 1 \quad  x^2 = X^2 = 0, \quad gx = -xg, \quad GX =
-XG,
$$
$$
gG = Gg, \quad gX = -Xg, \quad x G = - Gx, \quad xX + Xx = \lambda
\, (1 -  Gg)
$$
with the coalgebra structure given such that $g$ and $G$ are
group-likes, $x$ is $(1, g)$-primitive and $X$ is $(1,
G)$-primitive. We then prove that there are only three isomorphism
classes of Hopf algebras that factorize through two Sweedler's
Hopf algebras: $H_4 \ot H_4$, ${\mathcal H}_{16, \, 0}$ and
${\mathcal H}_{16, \, 1} \cong D(H_4)$, the Drinfel'd double of
$H_4$. Finally, we prove that there exist the following
isomorphisms of groups:
$$
\Aut_{\rm Hopf}( D(H_4)) \cong k^{\times} \rtimes \mathbb{Z}_2,
\quad \Aut _{\rm Hopf}(\mathcal{H}_{16,0}) \cong (k^{\times}
\times k^{\times}) \rtimes \mathbb{Z}_2 \cong \Aut _{\rm Hopf}(H_4
\otimes H_4)
$$

Since the tensor product of two pointed coalgebras is pointed
\cite[Lemma 5.1.10]{Montgomery}, it follows that the bicrossed
product of two pointed Hopf algebras is pointed. In particular,
$H_4 \ot H_4$ and ${\mathcal H}_{16, \, \lambda}$, for $\lambda
\in k$, are pointed Hopf algebras of dimension 16. The
classification of such Hopf algebras, over an algebraically closed
field of characteristic zero, was considered in \cite{Caenepeel
Dascalescu} and, without the pointedness assumption, in \cite{GV}.
With the notations of \cite[Theorem 5.2]{Caenepeel Dascalescu}, we
have $\mathbb{H}_{4} \ot H_{4} = H_{(3)}$, $\mathcal{H}_{16, \, 0}
= H_{(4)}$ and $\mathcal{H}_{16, \, 1} = H_{(5)}$. Thus, the Hopf
algebras we obtain here have already appeared in the literature.

The classification of pointed Hopf algebras has been the subject
of an intense study in the past years (\cite{AS98},
\cite{Caenepeel Dascalescu}, \cite{AS00}, \cite{AS10}) and many
classification results are known, especially when the coradical is
commutative. The most impressive result of this type was obtained
by Andruskiewitsch and Schneider in \cite{AS10} where the
classification of all finite-dimensional pointed Hopf algebras
with commutative coradical, whose dimension is not divisible by
primes $\leq 7$, is given. Therefore, if one hopes to obtain
really new examples of Hopf algebras by considering the
factorization problem then one has better chances in succeeding if
he considers pointed Hopf algebras with non-commutative coradical
or non-pointed Hopf algebras.

Finally, we point out that the dual problem of classifying all the
extensions of $H_{4}$ by $H_{4}$ was solved by Garc\'ia and Vay in
\cite[Lemma 2.8]{GV} who showed that all such extensions are
isomorphic to the tensor product $H_{4} \ot H_{4}$. Their proof
uses the cocycle bicrossproduct construction of \cite{Maj90} and
\cite{AD} as a tool, but, instead of computing all the cocycle
bicrossproducts of $H_{4}$ and $H_{4}$, they build their argument
on the fact that an extension of a Hopf algebra $A$ by another
Hopf algebra $B$ is a $B$-cleft extension of $A$, which allows
them to use of the description and the classification of the
$H_{4}$-cleft extensions of an algebra $A$ from \cite{Mas94} and
\cite{DT95}. The link between the factorization problem and the
extension problem was observed by Majid, who shows in
\cite[Proposition 7.2.4]{majid2} that the set of matched pairs
$(A, H, \rhd, \lhd)$ is in bijection with the set of
bicrossproduct data $(A^{\ast}, H, \alpha, \beta)$ giving rise to
cocycle bicrossproducts with trivial cocycles. Since $H_{4}^{\ast}
\simeq H_{4}$ one sees that the above correspondence breaks down
at the level of the isomorphism classes of the associated Hopf
algebra products, which is not so surprising considering that the
two kinds of products are different objects.


\section{Preliminaries}\selabel{prel}

We work over an arbitrary field $k$ of characteristic $\neq 2$.
All algebras, coalgebras, Hopf algebras are over $k$ and $\ot =
\ot_k$. We shall use the standard notations from Hopf algebras
theory: in particular, for a coalgebra $C$, we use the
$\Sigma$-notation: $\Delta(c) = c_{(1)} \ot c_{(2)}$, for any
$c\in C$ (summation understood). Let $A$ and $H$ be two Hopf
algebras. $A$ is a called a left $H$-module coalgebra if there
exists $\rhd : H \ot A \to A$ a morphism of coalgebras such that
$(A, \rhd)$ is also a left $H$-module. Similarly, $H$ is called a
right $A$-module coalgebra if there exists $\lhd : H \otimes A
\rightarrow H$ a morphism of coalgebras such that $(H, \lhd) $ is
a right $A$-module. The actions $\rhd: H \otimes A \to A$ and
$\lhd : H \otimes A \to H$ are called \emph{trivial} if $h \rhd a
= \varepsilon_H(h) a$ and $h \lhd a = \varepsilon_A (a) h$
respectively, for all $a \in A$ and $h \in H$.

A \textit{matched pair} of Hopf algebras (\cite{majid},
\cite{Kassel}) is a quadruple $(A, H, \rhd, \lhd)$, where $A$ and
$H$ are Hopf algebras, $\rhd: H \otimes A \to A$ and $\lhd: H
\otimes A \to H$ are coalgebra maps such that $(A, \rhd)$ is a
left $H$-module coalgebra, $(H, \lhd)$ is a right $A$-module
coalgebra and the following compatibility conditions hold:
\begin{eqnarray}
h \rhd 1_{A} &{=}& \varepsilon_{H}(h)1_{A}, \,\,\, 1_{H} \lhd a
= \varepsilon_{A}(a)1_{H} \eqlabel{mp1}\\
h \rhd (ab) &{=}& (h_{(1)} \rhd a_{(1)}) \bigl ( (h_{(2)} \lhd
a_{(2)}) \rhd b \bigl)
\eqlabel{mp2} \\
(g h) \lhd a &{=}& \bigl( g \lhd (h_{(1)} \rhd a_{(1)}) \bigl)
(h_{(2)} \lhd a_{(2)})
\eqlabel{mp3} \\
h_{(1)} \lhd a_{(1)} \otimes h_{(2)} \rhd a_{(2)} &{=}& h_{(2)}
\lhd a_{(2)} \otimes h_{(1)} \rhd a_{(1)} \eqlabel{mp4}
\end{eqnarray}
for all $a$, $b\in A$, $g$, $h\in H$. If $(A, H, \rhd, \lhd)$ is a
matched pair of Hopf algebras then the associated
\textit{bicrossed product} $A \bowtie H$ of $A$ with $H$ is the
vector space $A \ot H$ endowed with the tensor product coalgebra
structure and the multiplication
\begin{equation}\eqlabel{0010}
(a \bowtie g) \cdot (b \bowtie h):= a (g_{(1)} \rhd b_{(1)})
\bowtie (g_{(2)} \lhd b_{(2)}) h
\end{equation}
for all $a$, $b\in A$, $g$, $h \in H$, where we use $\bowtie$ for
$\ot$. $A \bowtie H$ is a Hopf algebra with the antipode given by
the formula:
\begin{equation}\eqlabel{antipbic}
S ( a \bowtie h ) := \big( 1_A \bowtie S_H (h) \big) \cdot \big(
S_A (a) \bowtie 1_H \big)
\end{equation}
for all $a\in A$ and $h\in H$ \cite[Theorem 7.2.2]{majid2},
\cite[Theorem IX 2.3]{Kassel}.

The basic example of a bicrossed product is the famous Drinfel'd
double of a finite dimensional Hopf algebra $H$: $D(H) =
(H^*)^{\rm cop} \bowtie H$, the bicrossed product associated to a
given canonical matched pair \cite[Theorem IX.3.5]{Kassel}. For
others examples of bicrossed products we refer to \cite{ABMbp},
\cite{Kassel}, \cite{majid2}.

We recall that a Hopf algebra $E$ \emph{factorizes} through two
Hopf algebras $A$ and $H$ if there exist injective Hopf algebra
maps $i : A \to E$ and $j : H \to E$ such that the map
$$
A \ot H \to E, \quad a \ot h \mapsto i(a) j(h)
$$
is bijective. The next fundamental result is due to Majid
\cite[Proposition 3.12]{majid}:  A Hopf algebra $E$ factorizes
through two given Hopf algebras $A$ and $H$ if and only if there
exists a matched pair of Hopf algebras $(A, H, \rhd, \lhd)$ such
that $E \cong A \bowtie H$. In light of this result, the
factorization problem for Hopf algebras was restated \cite{ABMbp}
in a computational manner: for two given Hopf algebras, $A$ and
$H$, describe the set of all matched pairs $(A, H, \rhd, \lhd)$
and classify up to isomorphisms all bicrossed products $A \bowtie
H$.


\section{The bicrossed products of two Sweedler's Hopf algebras}\selabel{H_4H_4}

In this section we are going to classify all the bicrossed
products $H_4 \bowtie H_4$. Recall that Sweedler's $4$-dimensional
Hopf algebra, $H_4$, is generated by two elements, $g$ and $x$,
subject to the relations $g^{2} = 1$, $x^{2} = 0$ and $xg = -gx$.
The coalgebra structure and the antipode are given by:
$$
\Delta(g) = g \otimes g, \quad \varepsilon(g) = 1, \quad S(g) = g
$$
$$
\Delta(x) = x \otimes 1 + g \otimes x, \quad \varepsilon(x) = 0,
\quad S(x) = -gx
$$
In order to avoid confusions we will denote by $\mathbb{H}_4$ a
copy of $H_4$, and by $G$ and $X$ the generators of
$\mathbb{H}_4$. Thus, $G^{2} = 1$, $X^{2} = 0$, $GX = - XG$, $G$
is a group-like element and $X$ is an $(1, G)$-primitive element.

Recall that, for a Hopf algebra, $H$, $\mbox{G}(H) = \{ g \in H
\mid \Delta (g) = g \ot g, \, \varepsilon (g) = 1\}$ is the set of
group-like elements of $H$ and, for $g$,  $h \in \mbox{G}(H)$,
$\mbox{P}_{g, h} (H) = \{ x \in H \mid \Delta(x) = x \ot g + h \ot
x\}$ is the set of $(g, h)$-primitive elements of $H$. For the
Sweedler Hopf algebra we have:
$$
\textnormal{G}(H_4) = \{1, \, g \}, \quad \textnormal{P}_{1, \, 1}
(H_4) = \textnormal{P}_{g, \, g} (H_4) = \{0\}, \quad
\textnormal{P}_{1, g} (H_4) = k (1 - g) \oplus kx
$$

The next theorem describes the set of all matched pairs $(A =
\mathbb{H}_4, H = H_4, \rhd, \lhd)$.

\begin{theorem}\thlabel{mpHH00}
Let $k$ be a field of characteristic $\neq 2$. Then
$(\mathbb{H}_4, H_4, \rhd , \lhd)$ is a matched pair of Hopf
algebras if and only if $(\rhd$, $\lhd)$ are both the trivial
actions or the pair $(\rhd$, $\lhd)$ is given by:
\begin{center}
\begin{tabular} {r | r  r  r  r  }
$\rhd$ & 1 & $G$ & $X$ & $GX$\\
\hline
1 & 1 & $G$ & $X$ & $GX$ \\
$g$ & 1 & $G$ & $-X$ & $-GX$ \\
$x$ & 0 & 0 & $\lambda - \lambda \, G$ & $\lambda - \lambda \, G$\\
$gx$ & 0 & 0 & $\lambda - \lambda \, G$ & $\lambda - \lambda\,G$
\\\end{tabular} \, \qquad
\begin{tabular} {r | r  r  r  r  }
$\lhd$ & 1 & $G$ & $X$ & $GX$\\
\hline
1 & 1 & 1 & 0 & 0\\
$g$ & $g$ & $g$ & 0 & 0 \\
$x$ & $x$ & $-x$ & $\lambda - \lambda \, g$ & $-\lambda + \lambda \, g$\\
$gx$ & $gx$ & $-gx$ & $-\lambda + \lambda \, g$ & $\lambda - \lambda \, g$ \\
\end{tabular}
\end{center}
for some $\lambda \in k$.
\end{theorem}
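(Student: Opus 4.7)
The plan is to determine each of the values $h\rhd a$ and $h\lhd a$ for $h\in\{1,g,x,gx\}$ and $a\in\{1,G,X,GX\}$ by applying three successive layers of constraints: first the primitive-element analysis of \leref{primitive} combined with the explicit classification \leref{primitsw} of primitives in $H_4$, then the module axioms together with the coalgebra-map property of $\rhd$ and $\lhd$, and finally the compatibility conditions \equref{mp2}--\equref{mp4} that couple the two actions. Throughout, \equref{mp1} disposes of the values with $h=1_H$ or $a=1_A$.

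First I would settle the actions on group-likes. By \leref{primitive}(1), $g\rhd G\in\textnormal{G}(\mathbb{H}_4)=\{1,G\}$, and $g\rhd G=1$ would force $G=1\rhd G=g\rhd(g\rhd G)=g\rhd 1=1$ via the module axiom applied to $g^2=1$, a contradiction; hence $g\rhd G=G$, and symmetrically $g\lhd G=g$. Next, \leref{primitive}(2)--(3) applied with $x\in\textnormal{P}_{1,g}(H_4)$ and $X\in\textnormal{P}_{1,G}(\mathbb{H}_4)$ yields $x\rhd G\in\textnormal{P}_{G,G}(\mathbb{H}_4)=0$ and $g\lhd X\in\textnormal{P}_{g,g}(H_4)=0$, while $g\rhd X\in\textnormal{P}_{1,G}(\mathbb{H}_4)$ and $x\lhd G\in\textnormal{P}_{1,g}(H_4)$ are, by \leref{primitsw}, of the form $\alpha(1-G)+\beta X$ and $\alpha'(1-g)+\beta' x$ respectively. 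The module relations $g^2\rhd X=X$ and $(x\lhd G)\lhd G=x$ then, in characteristic $\neq 2$, force $\beta^2={\beta'}^2=1$ and $\alpha(1+\beta)=\alpha'(1+\beta')=0$, producing a binary dichotomy $\beta,\beta'\in\{\pm 1\}$.

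For the ``primitive on primitive'' values $x\rhd X$, $x\rhd GX$, $gx\rhd X$, $gx\rhd GX$ and their $\lhd$-analogues, \leref{primitive} does not apply directly; instead I would compute $\Delta_{H\otimes A}(x\otimes X)$ and apply the coalgebra-map property of $\rhd$. Using the already established values $x\rhd 1=x\rhd G=0$ and $g\rhd G=G$, the cross-terms collapse and one finds $x\rhd X\in\textnormal{P}_{1,G}(\mathbb{H}_4)$, again a two-parameter element. The module axioms $x^2\rhd a=0$ and $(xg+gx)\rhd a=0$, together with the analogous $\lhd$-axioms and the relation $GX=-XG$, further reduce these parameters.

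The main obstacle is the last step: enforcing \equref{mp2}--\equref{mp4}. I would substitute the parameterized actions into \equref{mp4} on the pairs $(x,X)$, $(x,GX)$, $(gx,X)$, $(gx,GX)$, producing tensor identities in $H_4\otimes\mathbb{H}_4$ that couple each $\rhd$-scalar to a $\lhd$-scalar; then \equref{mp2} applied to $g\rhd(GX)$ and $x\rhd(GX)$, and \equref{mp3} applied to $(gx)\lhd G$ and $(gx)\lhd X$, collapse the remaining freedom. The expected outcome is a clean split into two cases: either all free scalars vanish, yielding the trivial matched pair $h\rhd a=\varepsilon_H(h)a$, $h\lhd a=\varepsilon_A(a)h$, or a one-parameter family in which a single $\lambda\in k$ simultaneously controls $x\rhd X=x\rhd GX=\lambda(1-G)$ and $x\lhd X=-x\lhd GX=\lambda(1-g)$, with all sign patterns $g\rhd X=-X$, $g\rhd GX=-GX$, $x\lhd G=-x$, $gx\lhd G=-gx$ forced by $g^2=1$ and $GX=-XG$. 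The combinatorial bookkeeping is sizeable but each individual identity is short, and the converse verification that every such quadruple does satisfy \equref{mp1}--\equref{mp4} reduces to a direct calculation on generators.
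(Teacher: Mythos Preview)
Your outline is correct and uses the same toolkit as the paper: \leref{primitive} and \leref{primitsw} to pin down the values on group-likes and to parametrize the primitive slots, the module axioms $g^2=1$, $x^2=0$, $gx=-xg$ (and their $\mathbb{H}_4$-analogues) to cut down parameters, and finally \equref{mp2}--\equref{mp4} to couple the two actions. The one organizational difference worth noting is that the paper first treats $\rhd$ and $\lhd$ \emph{separately}: it classifies all normalized right $\mathbb{H}_4$-module coalgebra structures on $H_4$ into four explicit families $\lhd^1,\dots,\lhd^4$ (\prref{actdr}), does the same for $\rhd$ to get $\rhd^1,\dots,\rhd^4$ (\prref{actstg}), and only then runs through the sixteen combinations $(\rhd^i,\lhd^j)$, quickly discarding twelve of them because \equref{mp2} fails on a single generator and two more because \equref{mp4} fails. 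Your single-pass parametrization would reach the same equations, but the paper's $4\times 4$ bookkeeping makes the elimination step especially transparent: one short computation per discarded pair rather than a large coupled system. Either route works; the paper's buys clarity in the case analysis, yours avoids the intermediate taxonomy.
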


We prove this result in three steps. The first one is
\leref{actdr} where we describe the set of all right
$\mathbb{H}_4$-module coalgebra structures $\lhd$ on $H_4$
satisfying the normalizing condition $1 \lhd h = \varepsilon(h)1$,
for all $h \in \mathbb{H}_4$. There will be four such families of
actions, $\lhd^{j}$, $j = 1,2,3,4$, parameterized by scalars $a$,
$b$, $c$, $d\in k$.

The second step is \leref{actstg} where we describe the set of all
left $H_4$-module coalgebra structures $\rhd$ on $\mathbb{H}_4$
satisfying the normalizing condition $h \rhd 1 = \varepsilon(h)1$,
for all $h \in H_4$. There will also be four families of such
actions, $\rhd^{i}$, $i = 1,2,3,4$ parameterized by scalars $s$,
$t$, $u$, $v\in k$.

The final step consists of a detailed analysis of the sixteen
possibilities of choice for the pair of actions $(\rhd^i,
\lhd^j)$, for all $i$, $j = 1$, $2$, $3$, $4$. This will show that
the only ones that verify the axioms \equref{mp2}-\equref{mp4} of
a matched pair are: $(\rhd^1, \lhd^1)$, i.e. the pair of trivial
actions, and $(\rhd^4, \lhd^4)$, in which case the actions take
the form described in the statement.

We begin with:

\begin{lemma}\lelabel{actdr}
If $\lhd : H_4 \ot \mathbb{H}_4 \rightarrow H_4$ is a right
$\mathbb{H}_4$-module coalgebra structure such that $1 \lhd h =
\varepsilon(h)1$, for all $h \in \mathbb{H}_4$, then $\lhd$ has
one of the following forms:
\begin{center}
\begin{tabular} {l | r  r  r  r  }
$\lhd^1$ & 1 & $G$ & $X$ & $GX$\\
\hline 1 & 1 & 1 & 0 & 0\\
$g$ & $g$ & $g$ & 0 & 0 \\
$x$ & $x$ & $x$ & 0 & 0\\
$gx$ & $gx$ & $gx$ & 0 & 0 \\
\end{tabular} \qquad
\begin{tabular} {l | r  r  r  r  }
$\lhd^2$ & 1 & $G$ & $X$ & $GX$\\
\hline 1 & 1 & 1 & 0 & 0\\
$g$ & $g$ & $g$ & 0 & 0 \\
$x$ & $x$ & $x$ & 0 & 0\\
$gx$ & $gx$ & $c-cg-gx$ & $d-dg$ & $-d+dg$ \\
\end{tabular}
\end{center}
\vspace{0.5mm}
\begin{center}
\begin{tabular} {l | r  r  r  r  }
$\lhd^3$ & 1 & $G$ & $X$ & $GX$\\
\hline 1 & 1 & 1 & 0 & 0\\
$g$ & $g$ & $g$ & 0 & 0 \\
$x$ & $x$ & $a-ag-x$ & $b-bg$ & $-b+bg$\\
$gx$ & $gx$ & $gx$ & 0 & 0 \\
\end{tabular} \qquad
\begin{tabular} {l | r  r  r  r  }
$\lhd^4$ & 1 & $G$ & $X$ & $GX$\\
\hline 1 & 1 & 1 & 0 & 0\\
$g$ & $g$ & $g$ & 0 & 0 \\
$x$ & $x$ & $a-ag-x$ & $b-bg$ & $-b+bg$\\
$gx$ & $gx$ & $c-cg-gx$ & $d-dg$ & $-d+dg$ \\
\end{tabular}
\end{center}
where $a$, $b$, $c$, $d\in k$.
\end{lemma}

\begin{proof} Let $\lhd : H_4 \ot \mathbb{H}_4 \rightarrow H_4$ be
a right $\mathbb{H}_4$-module coalgebra structure such that $1
\lhd h = \varepsilon(h)1$, for all $h \in \mathbb{H}_4$. Then $1
\lhd G = 1$, $1 \lhd X = 0$, and $1 \lhd (GX) = 0$. Also, $g \lhd
G \in \mbox{G} (H_{4})$. We cannot have $g \lhd G = 1$, for
otherwise $1 = 1 \lhd G = (g \lhd G) \lhd G = g \lhd 1 = g$.
Therefore $g \lhd G = g$. Since $g \lhd X \in \mbox{P}_{g, g}
(H_{4})$, we deduce that $g \lhd X = 0$. Similarly, $g \lhd (GX) =
0$. Observe that the actions of $X$ and $GX$ on $g$ are compatible
with the relations $X^2 = 0$ and $G X =-XG$.

We next show that
\begin{center}
\begin{tabular}{l | r  r  r  r }
  $\lhd$ & 1 & $G$ & $X$ & $GX$ \\
  \hline
  $x$ & $x$ & $x$ & 0 & 0 \\
\end{tabular} \hspace{0.2in} or \hspace{0.2in}
\begin{tabular}{l | r  r  r  r }
  $\lhd$ & 1 & $G$ & $X$ & $GX$ \\
  \hline
  $x$ & $x$ & $a - ag - x$ & $b - bg$ & $-b + bg$ \\
\end{tabular}
\end{center}
for some $a$, $b\in k$.

We have $x \lhd G \in \mbox{P}_{1,g}(H_4)$, hence $x \lhd G = a -
ag + bx$, for some $a$, $b \in k$. Since the action of $G$ is
compatible with $G^2 = 1$, we have
$$
x = x \lhd 1 = (x \lhd G) \lhd G = (a - ag + bx) \lhd G = a + ba -
(a + ba)g + b^2x
$$
Thus, $b^2 = 1$ and $a (1 + b) = 0$. If $b = -1$ then there are no
restrictions on $a$. Otherwise, $b = 1$ and $a = 0$. This shows
that $x \lhd G = x$ or $x \lhd G = a - ag - x$, with $a \in k$.

We also have $x \lhd X \in \mbox{P}_{1,g} (H_4)$, hence $x \lhd X
= b - bg + cx$, for some $b$, $c \in k$. Using that $X^2=0$, we
have
$$
0 = x \lhd 0 = (x \lhd X) \lhd X = (b - bg + cx) \lhd X = cb - cbg
+ c^2x
$$
Thus, $c = 0$ and $x \lhd X = b - bg$.

If $x \lhd G = x$ then $ x \lhd (GX) = x \lhd X$ and, if $x \lhd G
= a - ag - x$ then $ x \lhd (GX) = (a - ag - x) \lhd X = -x \lhd
X$. Observe that, in both cases, $\varepsilon \big( x \lhd (GX)
\big) = 0$, and $ \Delta \big( x \lhd (GX) \big) = x_{(1)} \lhd
(GX)_{(1)} \ot x_{(2)} \lhd (GX)_{(2)}$.

It remains to see when $x \lhd (GX) = x \lhd (-XG)$. If $x \lhd G
= x$, then
$$
x \lhd (XG) = (b - bg) \lhd G = b - bg = x \lhd X = x \lhd (GX)
$$
Thus, $x \lhd (GX) = x \lhd (-XG)$ implies $x \lhd X = x \lhd (GX)
= 0$. If $x \lhd G = a - ag - x$, then
$$
x \lhd (XG) = (b - bg) \lhd G = b - bg = x \lhd X = -x \lhd (GX)
$$
In this case, the equality $x \lhd (GX) = x \lhd (-XG)$ is
satisfied without further restrictions.

In a similar manner it can be shown that
\begin{center}
\begin{tabular}{l | r  r  r  r }
  $\lhd$ & 1 & $G$ & $X$ & $GX$ \\
  \hline
  $gx$ & $gx$ & $gx$ & 0 & 0 \\
\end{tabular} \hspace{0.2in} or \hspace{0.2in}
\begin{tabular}{l | r  r  r  r }
  $\lhd$ & 1 & $G$ & $X$ & $GX$ \\
  \hline
  $gx$ & $gx$ & $c - cg - gx$ & $d - dg$ & $-d + dg$ \\
\end{tabular}
\end{center}
for some $c$, $d\in k$.
\end{proof}

Analogous to \leref{actdr} we can prove:

\begin{lemma}\lelabel{actstg}
If $\rhd : H_4 \ot \mathbb{H}_4 \to \mathbb{H}_4$ is a left
$H_4$-module coalgebra structure such that $h \rhd 1 =
\varepsilon(h)1$, for all $h \in H_4$, then $\rhd$ has one of the
following forms:
\begin{center}
\begin{tabular} {l | r  r  r  r  }
$\rhd^1$ & 1 & $G$ & $X$ & $GX$\\
\hline 1 & 1 & $G$ & $X$ & $GX$\\
$g$ & 1 & $G$ & $X$ & $GX$\\
$x$ & 0 & 0 & 0 & 0\\
$gx$ & 0 & 0 & 0 & 0 \\
\end{tabular} \qquad
\begin{tabular} {l | r  r  r  r  }
$\rhd^2$ & 1 & $G$ & $X$ & $GX$\\
\hline 1 & 1 & $G$ & $X$ & $GX$\\
$g$ & 1 & $G$ & $X$ & $u-uG-GX$ \\
$x$ & 0 & 0 & 0 & $v-vG$\\
$gx$ & 0 & 0 & 0 & $v-vG$ \\
\end{tabular}
\end{center}
\vspace{0.5mm}
\begin{center}
\begin{tabular} {l | r  r  r  r  }
$\rhd^3$ & 1 & $G$ & $X$ & $GX$\\
\hline 1 & 1 & $G$ & $X$ & $GX$\\
$g$ & 1 & $G$ & $s-sG-X$ & $GX$ \\
$x$ & 0 & 0 & $t-tG$ & 0\\
$gx$ & 0 & 0 & $t-tG$ & 0 \\
\end{tabular} \qquad
\begin{tabular} {l | r  r  r  r  }
$\rhd^4$ & 1 & $G$ & $X$ & $GX$\\
\hline 1 & 1 & $G$ & $X$ & $GX$\\
$g$ & 1 & $G$ & $s-sG-X$ & $u-uG-GX$ \\
$x$ & 0 & 0 & $t-tG$ & $v-vG$\\
$gx$ & 0 & 0 & $t-tG$ & $v-vG$ \\
\end{tabular}
\end{center}
where $s$, $t$, $u$, $v\in k$.
\end{lemma}

\begin{proof} One can check the validity of the statement by
employing the same arguments as those used in the proof of
\leref{actdr}. A more elegant and shorter proof can be deduced
from the following elementary remark: if $\rhd: H \ot C \to C $ is
a left $H$-module coalgebra on $C$ then $\lhd : C \ot H^{\rm cop}
\to C$, $c \lhd h := S(h) \rhd c$, for all $c \in C$ and $h \in H$
is a right $H^{\rm cop}$-module coalgebra on $C$, and the above
correspondence is bijective if the antipode of $H$ is bijective.
We apply this observation for $H = H_4$ and $C = \mathbb{H}_4$,
which is just a copy of $H_4$, taking into account that the
antipode of $H_4$ is bijective and $H_4^{\rm cop} \cong H_4$. In
this way, the proof of \leref{actstg} follows from the one of
\leref{actdr}.
\end{proof}

We are now in a position to finish the proof of \thref{mpHH00}.

\begin{proof}[The proof of \thref{mpHH00}]
Let $(\mathbb{H}_4, H_4, \rhd, \lhd)$ be a matched pair. Since
$\lhd : H_4 \ot \mathbb{H}_4 \to H_4$ is a right
$\mathbb{H}_4$-module coalgebra structure satisfying $1 \lhd h =
\varepsilon(h)1$, for all $h \in \mathbb{H}_4$, we deduce from
\leref{actdr} that $\lhd$ is one of the $\lhd^i$'s. Similarly,
$\rhd : H_4 \ot \mathbb{H}_4 \to \mathbb{H}_4$ is a left
$H_4$-module coalgebra structure satisfying $h \rhd 1 =
\varepsilon(h)1$, for all $h\in H_4$, hence, $\rhd$ is one of the
$\rhd^j$'s, by \leref{actstg}. We next show that $(\mathbb{H}_4,
H_4,\rhd^j, \lhd^i)$ is a matched pair if and only if
$(i,j)\in\{(1,1),(4,4)\}$ and, if $(i,j)=(4,4)$, then $\rhd^i$ and
$\lhd^j$ are defined as we have claimed.

Firstly, if $i = 2, 3$ or $j = 2, 3$ then $(\mathbb{H}_4,
H_4,\rhd^i, \lhd^j)$ is not a matched pair. Indeed, if $i = 2, 3$
then condition \equref{mp2} is not satisfied for the triple $(g,
G, X)$, while if $j = 2, 3$ then condition \equref{mp3} is not
satisfied for the triple $(x, g, G)$.

Secondly, $(\mathbb{H}_4, H_4, \rhd^4, \lhd^1)$ and
$(\mathbb{H}_4, H_4, \rhd^1, \lhd^4)$ are not matched pairs, since
condition \equref{mp4} fails to be fulfilled in the former case
for the pair $(x, GX)$ and in the later case for the pair $(gx,
X)$.

We focus now our attention on when $(\mathbb{H}_4, H_4, \rhd^4,
\lhd^4)$ is a matched pair and for this we look at the conditions
\equref{mp2}-\equref{mp4}. It is not hard to see that \equref{mp4}
is trivially fulfilled for all $(h,a) \in \{1, g, x, gx \} \times
\{1, G, X, GX \} \setminus \{(x, X), (x, GX), (gx, X), (gx,
GX)\}$. A straightforward computation shows that the same
condition is satisfied by $(x, X)$ if and only if $t = b$ and $a =
s = 0$, by $(gx, GX)$ if and only if $v = -d$ and $c = u = 0$, by
$(x, GX)$ if and only if $v = b$, and by $(gx, X)$ if and only if
$t = -d$. Thus, condition \equref{mp4} is fulfilled if and only if
$a = c = s = u = 0$, $t = v = b$ and $d = -b$. It remains to see
that conditions \equref{mp2} and \equref{mp3} are compatible with
the relations $G^2 = g^2 = 1$, $X^2 = x^2 = 0$, $gx = -xg$ and $GX
= -XG$. Since this is straightforward, the proof is complete.
\end{proof}

We are able to describe and classify all Hopf algebras that
factorize through two Sweedler's Hopf algebras.

\begin{theorem}\thlabel{clH_4H_4}
Let $k$ be a field of characteristic $\neq 2$. Then:

$(1)$ A Hopf algebra $E$ factorizes through $\mathbb{H}_4$ and
$H_4$ if and only if $E \cong \mathbb{H}_4 \ot H_4$ or $E \cong
{\mathcal H}_{16, \, \lambda}$, for some $\lambda \in k$, where
${\mathcal H}_{16, \, \lambda}$ is the $16$-dimensional Hopf
algebra generated by $g$, $x$, $G$, $X$ subject to the relations:
$$
g^2 = G^2 = 1 \quad  x^2 = X^2 = 0, \quad gx = -xg, \quad GX = -XG
$$
$$
gG = Gg, \quad gX = -Xg, \quad x G = - Gx, \quad xX + Xx = \lambda
\, (1 -  Gg)
$$
with the coalgebra structure given by
$$
\Delta (g) = g \ot g, \quad \Delta (x) = x\ot 1 + g \ot x, \quad
\Delta (G) = G \ot G, \quad \Delta (X) = X\ot 1 + G \ot X,
$$
$$
\varepsilon (g) = \varepsilon (G) = 1, \qquad \varepsilon (x) =
\varepsilon (X) = 0
$$

$(2)$ $\mathcal{H}_{16, \, \lambda}$ is pointed, unimodular, and
non-semisimple. Moreover,
\begin{center}
$\textnormal{P}_{1, g} \big( \mathcal{H}_{16, \, \lambda} \big) =
k (1 - g) \oplus kx, \qquad \textnormal{P}_{1, G} \big(
\mathcal{H}_{16, \, \lambda} \big) = k (1 - G) \oplus kX,$

$\textnormal{P}_{1, gG} \big( \mathcal{H}_{16, \, \lambda} \big) =
k (1 - gG)$
\end{center}

$(3)$ Up to an isomorphism of Hopf algebras, there are only three
Hopf algebras that factorize through $\mathbb{H}_4$ and $H_4$,
namely
\begin{equation}\label{cele 3 algebre}
\mathbb{H}_4 \ot H_4, \qquad {\mathcal H}_{16, \, 0} \qquad {\rm
and} \qquad {\mathcal H}_{16, \, 1} \cong D(H_4)
\end{equation}
where $D(H_4)$ is the Drinfel'd double of $H_4$.
\end{theorem}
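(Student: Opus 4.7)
My plan is to handle the three parts in sequence, leveraging Majid's theorem and \thref{mpHH00}. For part (1), Majid's theorem reduces the factorization problem to classifying bicrossed products $\mathbb{H}_4 \bowtie H_4$, and by \thref{mpHH00} these come in only two families: the trivial matched pair yields $\mathbb{H}_4 \ot H_4$, while the one-parameter family yields $\mathcal{H}_{16, \lambda}$. To extract the generator presentation from the action tables I would apply \equref{0010} pair by pair. The relations $gG = Gg$, $gX = -Xg$, $xG = -Gx$ fall out of short computations using the relevant entries. The key relation $xX + Xx = \lambda(1-Gg)$ comes from expanding $(1 \bowtie x)(X \bowtie 1)$: the four terms produced by $\Delta(x) = x \ot 1 + g \ot x$ and $\Delta(X) = X \ot 1 + G \ot X$, combined with $x \rhd X = \lambda - \lambda G$ and $x \lhd X = \lambda - \lambda g$, collapse to $\lambda(1 \bowtie 1) - \lambda(G \bowtie g) - (X \bowtie x)$, which rearranges to the stated relation. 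The coalgebra structure of the bicrossed product then supplies the announced coproducts and counits.

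For part (2), the dimension is automatic: $\dim(\mathbb{H}_4 \bowtie H_4) = 16$. Pointedness follows because the bicrossed product carries the tensor product coalgebra structure, so its coradical is $k[G(\mathbb{H}_4)] \ot k[G(H_4)]$; using \leref{primitive} applied to the matched pair I would show $G(\mathcal{H}_{16,\lambda}) = \{1, g, G, Gg\} \cong \ZZ_2 \times \ZZ_2$, and this group algebra coincides with the coradical. Non-semisimplicity is immediate from Nichols--Zoeller: $\mathcal{H}_{16,\lambda}$ contains the non-semisimple $H_4 = k\langle g, x \rangle$ as a Hopf subalgebra. For unimodularity I would exhibit the two-sided integral $\Lambda := (1+g)(1+G)xX$ and verify $h\Lambda = \varepsilon(h)\Lambda = \Lambda h$ for each $h \in \{g, x, G, X\}$ by direct computation using the defining relations, in particular the identity $XxX = \lambda(1-Gg)X$ and the vanishing $(1-g)(1-G)(1-Gg) = 0$.

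Part (3) is the most delicate. First, for any $\lambda \neq 0$ the rescaling $x \mapsto \lambda^{-1}x$, with $g$, $G$, $X$ fixed, extends to a Hopf algebra isomorphism $\mathcal{H}_{16,\lambda} \cong \mathcal{H}_{16, 1}$, since it preserves every comultiplication and sends $xX + Xx = \lambda(1-Gg)$ to the $\lambda = 1$ relation. To separate the three classes $\mathbb{H}_4 \ot H_4$, $\mathcal{H}_{16, 0}$, $\mathcal{H}_{16, 1}$, I would apply Theorem 2.2 of \cite{ABMbp} combined with \leref{morf_H_4H_4}: any putative Hopf algebra morphism is parameterized by a small system in four unitary coalgebra maps, and a finite case analysis---using that $\mathbb{H}_4$ is central in $\mathbb{H}_4 \ot H_4$ but not in $\mathcal{H}_{16,\lambda}$ (since $gX = -Xg$), and that $xX + Xx$ vanishes in $\mathcal{H}_{16, 0}$ but equals $1 - Gg \neq 0$ in $\mathcal{H}_{16, 1}$---rules out any isomorphism between the three. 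For $\mathcal{H}_{16, 1} \cong D(H_4)$, I would use the self-duality $H_4 \cong H_4^*$ and $H_4^{{\rm cop}} \cong H_4$ (via $g \mapsto g$, $x \mapsto gx$) to conclude $(H_4^*)^{{\rm cop}} \cong H_4$; hence $D(H_4)$ factorizes through two copies of $H_4$ and, by part (1) together with the classification just given, must be one of our three Hopf algebras. Since $D(H_4)$ is neither a tensor product nor has $\lambda = 0$ in its presentation, it must be $\mathcal{H}_{16, 1}$. The main obstacle will be this last identification: transporting the canonical action of $H_4$ on $(H_4^*)^{{\rm cop}}$ through the self-duality isomorphism into the generators $G$, $X$ of \thref{mpHH00}, computing the resulting $\rhd$ and $\lhd$ explicitly, and checking that the corresponding parameter is non-zero is the only non-routine bookkeeping required.
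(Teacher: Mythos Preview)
Your proposal follows essentially the same route as the paper: Majid's theorem plus \thref{mpHH00} for part (1), the tensor-product coalgebra structure for pointedness, an explicit integral for unimodularity, and \cite[Theorem 2.2]{ABMbp} for the non-isomorphism analysis in part (3). Your integral $(1+g)(1+G)xX$ is in fact exactly $-(X+GX)(x-gx)$, the paper's choice, so that part is fine; your rescaling of $x$ rather than $X$ also works.

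Two points deserve tightening. First, the claim ``$\mathbb{H}_4$ is central in $\mathbb{H}_4 \ot H_4$'' is false as stated ($G$ and $X$ do not commute); what is true, and what you presumably mean, is that the two tensor factors commute with each other. Second, neither this commutation nor the vanishing of $xX+Xx$ is an \emph{a priori} Hopf-algebra invariant, since an isomorphism need not send $G, X, g, x$ to themselves. The paper does not use such invariants: it runs the full case analysis on the quadruples $(u,p,r,v)$ from \cite[Theorem 2.2]{ABMbp}, first reducing to the two cases (i) $u,v$ trivial and $p,r$ nontrivial Hopf maps, or (ii) $u,v$ nontrivial Hopf maps and $p,r$ trivial, and then checking in each case that condition \equref{C7} forces the relevant scalar parameter to vanish. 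Your heuristics can serve as motivation, but the actual argument must go through this parametrization; once you commit to that, your outline matches the paper's proof.
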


\begin{proof}
$(1)$ The Hopf algebra ${\mathcal H}_{16, \, \lambda}$ is the
explicit description of the bicrossed product $\mathbb{H}_4
\bowtie H_4$ associated to the non-trivial matched pair given in
\thref{mpHH00}. In $\mathbb{H}_4 \bowtie H_4$ we make the
canonical identifications: $G = G \bowtie 1$, $X = X \bowtie 1$, $
g = 1 \bowtie g$, $x = 1 \bowtie x$. The defining relations of
${\mathcal H}_{16, \, \lambda}$ follow easily. For instance:
\begin{eqnarray*}
x \, X &=& (1 \bowtie x) (X \bowtie 1) = (\lambda - \lambda G)
\bowtie 1 - X \bowtie x + G \bowtie (\lambda - \lambda \, g)\\
&=& \lambda 1 \bowtie 1 - X \bowtie x - \lambda \, G \bowtie g =
\lambda \, 1 - Xx - \lambda \, Gg
\end{eqnarray*}

$(2)$ $\mathcal {H}_{16, \, \lambda}$ is pointed because, as a
coalgebra, is the tensor product of two pointed coalgebras
\cite[Lemma 5.1.10]{Montgomery}. The coradical of
$\mathcal{H}_{16, \, \lambda}$ is $k[\mbox{G}(\mathbb{H}_{4})] \ot
k[\mbox{G}(H_{4})]$, hence $\mbox{G} (\mathcal{H}_{16, \,
\lambda}) = \{ 1, g, G,$ $gG \} \simeq \mathbb{Z}_{2} \times
\mathbb{Z}_{2}$. Since $H_{4}$ is non-semisimple, so is ${\mathcal
H}_{16, \, \lambda}$ \cite[Corollary 3.2.3]{Montgomery}.
${\mathcal H}_{16, \, \lambda}$ is unimodular since $(X + GX)(x -
gx)$ is simultaneously a non-zero left and right integral as it
can easily be verified. The last part follows by a routine check.

$(3)$ $\mathcal{H}_{16, \, \lambda} \cong \mathcal{H}_{16, \, 1}$,
for $\lambda \in k^{\times}$, since the defining relations for
${\mathcal H}_{16, \, 1}$ can be obtained from that of ${\mathcal
H}_{16, \, \lambda}$ by replacing $X$ with $\lambda^{-1} X$.

We next prove that $\mathbb{H}_4 \ot H_4$, $\mathcal{H}_{16, \,
0}$ and $\mathcal{H}_{16, \, 1}$ are non-isomorphic Hopf algebras.
Observe first that $\mathbb{H}_{4} \ot H_{4}$ is generated as an
algebra by the two copies of Sweedler's Hopf algebra,
$\mathbb{H}_{4}$ and $H_{4}$, such that the generators of
$\mathbb{H}_{4}$ commute with the generators of $H_{4}$. Moreover,
the sets of skew-primitive elements of $\mathbb{H}_{4} \ot H_{4}$
have the same description as in (2). In order to distinguish
between the generators of the Hopf algebras in question, we attach
a prime sign, $'$, to the elements of $\mathcal{H}_{16, \, 0}$ and
two such signs to the elements of $\mathcal{H}_{16, \, 1}$.

Assume $\varphi : \mathbb{H}_4 \ot H_4 \to \mathcal{H}_{16, \, 0}$
is a Hopf algebra isomorphism. Then $\varphi (g)$ is a group-like
element of $\mathcal{H}_{16, \, 0}$. Since the vector space of
$(1,g)$-primitive elements must have the same dimension as the
vector space of $\big( 1, \varphi(g) \big)$-elements, we have
$\varphi (g) \in \{g', G'\}$. If $\varphi (g) = g'$ then $\varphi
(G) = G'$ and $\varphi (X) \in \textnormal{P}_{1, G'} \big(
\mathcal{H}_{16, \, 0} \big)$. Let $a$, $b \in k$ such that
$\varphi (X) = a (1 - G') + b X'$. Taking into account that
$\varphi (G) \varphi (X) \varphi (G) = - \varphi (X)$, we obtain
that $a = 0$, hence $\varphi (X) = bX'$. Since $g$ and $X$
commute, we have $g' X' = \varphi (g) \varphi (X) = \varphi (X)
\varphi (g) = b X' g' = - b g'X'$, hence $b = 0$. Thus $\varphi
(X) = 0$, a contradiction with the fact that $\varphi$ has a
trivial kernel. A similar contradiction is obtained if $\varphi
(g) = G'$, so we conclude that $\mathbb{H}_4 \ot H_4$ and
$\mathcal{H}_{16, \, 0}$ are not isomorphic. Since the same
argument works when we consider $\mathbb{H}_4 \ot H_4$ and
$\mathcal{H}_{16, \, 1}$, we deduce that these Hopf algebras are
not isomorphic also.

Suppose now that $\varphi : \mathcal{H}_{16, \, 0} \to
\mathcal{H}_{16, \, 1}$ is a Hopf algebra isomorphism. Then, as
above, $\varphi (g') \in \{g'', G''\}$. If $\varphi (g') = g''$
then $\varphi (G') = G''$, $\varphi (x') \in \textnormal{P}_{1,
g''} \big( \mathcal{H}_{16, \, 1} \big)$ and $\varphi (X') \in
\textnormal{P}_{1, G''} \big( \mathcal{H}_{16, \, 1} \big)$. Let
$a$, $b \in k$ such that $\varphi (x') = a (1 - g'') + b x''$.
Since $\varphi (g) \varphi (x) \varphi (g) = - \varphi (x)$ it
follows that $\varphi (x') = bx''$. A similar argument shows that
$\varphi (X') = dX''$, for some $d \in k$. Using the fact that
$x'X' + X'x' = 0$, we have
$$
0 = \varphi (x'X' + X'x') = bd (x''X'' + X''x'') = bd (1 - g''G'')
$$
Therefore $b = 0$ or $d = 0$, with either case leading to a
contradiction. If $\varphi (g') = G''$ then we arrive at a similar
contradiction so we conclude that $\mathcal{H}_{16, \, 0} \ncong
\mathcal{H}_{16, \, 1}$ .

Finally, we show that $\mathcal{H}_{16, \, 1} \cong D (H_4)$.
First, recall that $D(H_{4})$ factorizes through $\left( H_4^*
\right)^{\rm cop} $ and $H_{4}$. Also, if $\{ 1^*, g^*, x^*,
(gx)^* \}$ denotes the dual basis of $\{1, g, x, gx\}$ then
$H_4^*$ is generated as an algebra by the group-like element $G =
1^* - g^*$ and by the $(G, 1)$-primitive element $X = x^*
+(gx)^*$, with the following relations $G^2 = 1$, $X^2 = 0$, and
$GX = -XG$. Therefore, $\left( H_4^* \right)^{\rm cop} \simeq
\mathbb{H}_4$, so $D (H_4)$ factorizes through $\mathbb{H}_4$ and
$H_4$. In order to see which of the three Hopf algebras from
(\ref{cele 3 algebre}) $D(H_4)$ is, recall the Drinfel'd double as
a matched pair. If $H$ is a finite dimensional Hopf algebra, then
$\left( \left( H^* \right)^{\rm cop}, H, \rhd, \lhd \right)$ is a
matched pair, where $h \rhd h^* = h^* \left(
S^{-1}(h_{(2)})?h_{(1)} \right)$ and $h \lhd h^* = h^* \left(
S^{-1}(h_{(3)})h_{(1)} \right) h_{(2)}$, for all $h \in H$, $h^*
\in \left( H^* \right)^{\rm cop}$, and $D(H) \simeq \left( H^*
\right)^{\rm cop} \bowtie H$. In our case, we have
\begin{align*}
x \lhd X & = X \left( S^{-1}(1)x \right) + X \left( S^{-1}(1)g
\right) x + X \left( S^{-1}(x)g \right) g\\
& = X(x) + X(g)x + X(-x)g\\
& =1 - g
\end{align*}
which shows that $D(H_4) \simeq \mathbb{H}_4 \bowtie_1 H_4$.
\end{proof}

\begin{remark} $\mathcal{H}_{16,0}$ is not the dual of $D(H_4)$ for
otherwise $D(H_4)^*$ would be unimodular and so would $H_{4}$
(\cite{Radford_MQHA}, Corollary 4).
\end{remark}

\begin{remark}
The classification of pointed Hopf algebras of dimension 16 over
an algebraically closed field of characteristic zero was done in
\cite{Caenepeel Dascalescu}. With the notations of \cite[Theorem
5.2]{Caenepeel Dascalescu}, we have $\mathbb{H}_{4} \ot H_{4}
\simeq H_{(3)}$, $\mathcal{H}_{16,0} \simeq H_{(4)}$ and
$\mathcal{H}_{16,1} \simeq H_{(5)}$.
\end{remark}

As a consequence of \thref{clH_4H_4} we are able to describe the
group of Hopf algebra automorphisms of the three Hopf algebras
from (\ref{cele 3 algebre}). We begin with the Drinfel'd double,
$D(H_4)$.

\begin{corollary}\prlabel{autDH}
Let $k$ be a field of characteristic $\neq 2$. Then there exists
an isomorphism of groups
$$
\Aut _{\rm Hopf}(D(H_4)) \cong k^* \rtimes_f \mathbb{Z}_2
$$
where $k^* \rtimes_f \, \mathbb{Z}_2$ is the semidirect product
associated to the action as automorphisms $f : \mathbb{Z}_2 \to
\Aut(k^*)$, $f (1 + 2\mathbb{Z}) (\alpha) = \alpha^{-1}$, for all
$\alpha \in k^*$.
\end{corollary}

\begin{proof}
We use \thref{clH_4H_4} and the description of
$\mathcal{H}_{16,1}$ given in (1). Let $\phi$ be a Hopf algebra
automorphisms of $\mathcal{H}_{16,1}$. Then $\{\phi (g), \phi(G)\}
= \{g, G\}$. If $(\phi (g), \phi(G)) = (g, G)$ then $\phi (x) \in
\textnormal{P}_{1, g} (\mathcal{H}_{16,1})$ and $\phi (X) \in
\textnormal{P}_{1, G} (\mathcal{H}_{16,1})$. Let $a$, $b$, $c$, $d
\in k$ such that $\phi (x) = a (1 - g)  + bx$ and $\phi (X) = c (1
- G) + d X$. Taking into account that $\phi(gxg) = - \phi(x)$ and
$\phi(GXG) = - \phi(X)$ we find that $a = c = 0$. Thus, $\phi (x)
= bx$ and $\phi (X) = dX$. Considering now the relation $xX + Xx =
1 - gG$ and applying $\phi$ to both terms of the equation we find
that $bd = 1$. Thus, $d = b^{-1}$. Since there are no further
restrictions on $b$ imposed by the fact that $\phi$ is a Hopf
algebra isomorphism, we have obtained, for each $b \in
k^{\times}$, an element $\varphi_{b} \in \Aut \big(
\mathcal{H}_{16,1} \big)$, given by
$$
\varphi_{b} (g) = g, \qquad \varphi_{b} (G) = G, \qquad
\varphi_{b} (x) = bx, \qquad \varphi_{b} (X) = b^{-1}X
$$
If $(\phi (g), \phi(G)) = (G, g)$ then, by a similar reasoning as
the one above, we have $\phi (x) = dX$ and $\phi (X) = d^{-1} x$,
for some $d \in k^{\times}$, and each such $\phi$ is a Hopf
algebra isomorphism of $\mathcal{H}_{16,1}$ that we denote by
$\psi_{d}$.

Summarizing, we have obtained that the set of Hopf algebra
automorphisms of $\mathcal{H}_{16,1}$ is $\Aut_{\rm Hopf} \big(
\mathcal{H}_{16,1} \big) = \{ \varphi_{b} \mid b \in k^{\times} \}
\cup \{ \psi_{d} \mid d \in k^{\times} \}$, a disjoint union of
two sets indexed by $k^{\times}$. The elements of $\Aut_{\rm Hopf}
\big( \mathcal{H}_{16,1} \big)$ multiply according to the
following rules
$$
\varphi_{b} \varphi_{d} = \varphi_{b d}, \qquad \psi_{b} \psi_{d}
= \varphi_{b^{-1} d}, \qquad \psi_{b} \varphi_{d} = \psi_{b d},
\qquad \varphi_{b} \psi_{d} = \psi_{b^{-1} d}
$$
for all $b$, $d \in k^{\times}$. Let $k^{\times} \rtimes_f
\mathbb{Z}_2$ be the semidirect product associated to $f :
\mathbb{Z}_2 \to \Aut(k^{\times})$, $f (1 + 2\mathbb{Z}) (b) =
b^{-1}$, for all $b \in k^{\times}$. Taking into account that
multiplication in $k^{\times} \rtimes_f \mathbb{Z}_2$ is given by
$(b, \hat{m}) \cdot (d, \hat{n}) = (b f(\hat{m})(d), \hat{m} +
\hat{n})$, for all $b$, $d \in k^{\times}$ and $\hat{m}$, $\hat{n}
\in \mathbb{Z}_2$, it is easy to see that
$$
\Gamma : k^{\times} \rtimes_f \mathbb{Z}_2 \to \Aut_{\rm Hopf}
\big( \mathcal{H}_{16,1} \big), \qquad \Gamma(b, \hat{0}) =
\varphi_{b}, \qquad \Gamma(b, \hat{1}) = \psi_{b^{-1}}
$$
for all $b \in k^{\times}$, is an isomorphisms of groups.
\end{proof}

\begin{proposition}\prlabel{autH_0}
Let $k$ be a field of characteristic $\neq 2$. Then there exist
isomorphisms of groups
$$
\Aut _{\rm Hopf} \big( \mathcal{H}_{16,0} \big) \cong (k^{\times}
\times k^{\times}) \rtimes_g \mathbb{Z}_2 \cong \Aut _{\rm Hopf}
\big( \mathbb{H}_4 \otimes H_4 \big)
$$
where $(k^{\times} \times k^{\times}) \rtimes_g \, \mathbb{Z}_2$
is the semidirect product associated to the action as
automorphisms $g : \mathbb{Z}_2 \to \Aut \big(k^{\times} \times
k^{\times} \big)$, $g (1 + 2\mathbb{Z}) (b, d) = (d, b)$, for all
$(b, d) \in k^{\times} \times k^{\times}$.
\end{proposition}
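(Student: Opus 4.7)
The plan is to mirror the strategy of \prref{autDH} for $D(H_4) = \mathcal{H}_{16, 1}$, adapted to the two Hopf algebras at hand, tracking carefully how the analysis simplifies when $\lambda = 0$ or when the actions are trivial. I begin by invoking the reduction already carried out in the proof of \thref{clH_4H_4}: any Hopf algebra isomorphism $\psi$ of $\mathcal{H}_{16, 0}$ (respectively $\mathbb{H}_4 \otimes H_4$) corresponds via \equref{morfbicros} to a quadruple $(u, p, r, v)$ of unitary coalgebra maps satisfying \equref{C1}--\equref{C8}, and only two configurations can occur: \textbf{Case A}, in which $u$ and $v$ are trivial while $p$ and $r$ are non-trivial Hopf algebra morphisms (by \equref{C4}, \equref{C5}); and \textbf{Case B}, in which $u$ and $v$ are non-trivial Hopf algebra morphisms (by \equref{C3}, \equref{C6}) while $p$ and $r$ are trivial. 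By \leref{morf_H_4H_4}, each of the non-trivial Hopf morphisms involved is parameterized by a single scalar.

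The main task is to decide, in each case, which scalars produce automorphisms, i.e.\ satisfy \equref{C7} and \equref{C8}. For $\mathcal{H}_{16, 0}$ in Case B, writing $u(X) = aX$ and $v(x) = bx$ with $a, b \in k$, conditions \equref{C7} and \equref{C8} reduce respectively to $v(h) \rhd_0 u(c) = u(h \rhd_0 c)$ and $v(h) \lhd_0 u(c) = v(h \lhd_0 c)$ for all $c \in \mathbb{H}_4$ and $h \in H_4$. The crucial observation is that $\lambda = 0$ forces $x \rhd_0 X = x \lhd_0 X = 0$ (together with the analogous vanishings involving $gx$ and $GX$), so the delicate pairs $(x, X), (x, GX), (gx, X), (gx, GX)$ yield only the tautology $0 = 0$ and no constraint of the form $ab = 1$ arises, in sharp contrast with the case of $D(H_4)$. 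Every $(a, b) \in k^* \times k^*$ therefore yields an automorphism $\varphi_{a, b}$. An analogous generator-by-generator verification in Case A, with $p(X) = \alpha x$ and $r(x) = \beta X$, again produces no constraint between $\alpha$ and $\beta$, giving a second family $\psi_{\alpha, \beta}$ indexed by $k^* \times k^*$.

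Once both families are in hand, applying \equref{morfbicros} on the generators $X \bowtie 1$ and $1 \bowtie x$ yields the composition rules
$$
\varphi_{a, b} \circ \varphi_{a', b'} = \varphi_{aa', bb'}, \qquad \psi_{\alpha, \beta} \circ \psi_{\alpha', \beta'} = \varphi_{\alpha' \beta, \beta' \alpha},
$$
$$
\varphi_{a, b} \circ \psi_{\alpha, \beta} = \psi_{b\alpha, a\beta}, \qquad \psi_{\alpha, \beta} \circ \varphi_{a, b} = \psi_{a\alpha, b\beta},
$$
so that the $\varphi$'s form a subgroup isomorphic to $k^* \times k^*$ and the $\psi$'s form its non-identity coset. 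Matching these rules against the multiplication in $(k^* \times k^*) \rtimes_g \mathbb{Z}_2$ (with $g(\hat{1})$ the coordinate swap) produces the desired isomorphism
$$
\Gamma : (k^* \times k^*) \rtimes_g \mathbb{Z}_2 \to \Aut_{\rm Hopf}(\mathcal{H}_{16, 0}), \qquad \Gamma((a, b), \hat{0}) = \varphi_{a, b}, \quad \Gamma((a, b), \hat{1}) = \psi_{b, a},
$$
verified just as in \prref{autDH}.

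For $\mathbb{H}_4 \otimes H_4$ the analysis is even easier: since both $\rhd$ and $\lhd$ are trivial, conditions \equref{C7} and \equref{C8} collapse to tautologies after applying counits in both cases, so once again one obtains full $k^* \times k^*$-families with the same composition rules, and the same semidirect product presentation results. The main obstacle throughout is therefore almost purely bookkeeping: one must verify carefully that no residual constraint sneaks into \equref{C7} or \equref{C8} when $\lambda = 0$, which ultimately boils down to the vanishing of $x \rhd_0 X$ and $x \lhd_0 X$---precisely the feature distinguishing $\mathcal{H}_{16, 0}$ from $D(H_4) = \mathcal{H}_{16, 1}$, and the source of the extra $k^*$-factor in the automorphism group.
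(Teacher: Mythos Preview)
Your proposal is correct and follows essentially the same approach as the paper's proof: the same two-case reduction inherited from \thref{clH_4H_4}, the same observation that for $\lambda = 0$ conditions \equref{C7} and \equref{C8} impose no relation between the two scalar parameters (the paper phrases this as ``a routine check'' while you helpfully pinpoint the vanishing of $x \rhd_0 X$ and $x \lhd_0 X$ as the reason), the same composition rules, and the same isomorphism $\Gamma$ with the semidirect product. The only step you leave slightly implicit is that the scalars must be nonzero for $\psi$ to be bijective, which the paper checks by inspecting the matrix of $\psi$; in your setup this follows immediately from the composition rules you state, since they exhibit inverses.
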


\begin{proof}
The method of proof in both cases is the same as the one used in
proving \prref{autDH}. Moreover, both isomorphisms are obtained in
exactly the same way, therefore we will limit ourselves in
pointing out the one for $\mathcal{H}_{16,0}$.

The set of Hopf algebra isomorphisms of $\mathcal{H}_{16,0}$ is
$\Aut_{\rm Hopf} \big( \mathcal{H}_{16,0} \big) = \{ \varphi_{b,
d} \,|\, (b, d) \in k^{\times} \times k^{\times} \} \cup \{
\psi_{b, d} \,|\, (b, d) \in k^{\times} \times k^{\times} \}$,
where $\varphi_{b, d}$ and $\psi_{b, d}$ are defined by
$$
\varphi_{b, d} (g) = g, \qquad \varphi_{b, d} (G) = G, \qquad
\varphi_{b, d} (x) = bx, \qquad \varphi_{b, d} (X) = dX
$$
$$
\psi_{b, d} (g) = G, \qquad \psi_{b, d} (G) = g, \qquad \psi_{b,
d} (x) = bX, \qquad \psi_{b, d} (X) = dx
$$
The elements of $\Aut_{\rm Hopf} \big( \mathcal{H}_{16,0} \big)$
multiply according to the following rules
$$
\psi_{b, d} \psi_{c, e} = \varphi_{d c, b e}, \quad \varphi_{b, d}
\varphi_{c, e} = \varphi_{b c, d e}, \quad \psi_{b, d} \varphi_{c,
e} = \psi_{b c, d e}, \quad \varphi_{b, d} \psi_{c, e} = \psi_{d
c, b e}
$$
for all $b$, $d$, $c$, $e \in k^{\times}$. Considering the
semi-direct product $(k^{\times} \times k^{\times}) \rtimes_g
\mathbb{Z}_2$ associated to $g : \mathbb{Z}_2 \to \Aut \big(
k^{\times} \times k^{\times} \big)$, $g (1 + 2\mathbb{Z}) (b, d) =
(d, b)$, for all $(b, d) \in k^{\times} \times k^{\times}$, we
obtain that
$$
\Gamma : (k^{\times} \times k^{\times}) \rtimes_g \mathbb{Z}_2 \to
\Aut_{\rm Hopf} \big( \mathcal{H}_{16,0} \big), \quad \Gamma \big(
(d, b), \hat{1} \big) = \psi_{b, d}, \quad \Gamma \big( (b, d),
\hat{0} \big) = \varphi_{b, d}
$$
for all $\alpha$, $\beta \in k^{\times}$, is an isomorphism of
groups.
\end{proof}

\section{Acknowledgement}

I would like to thank Professor Gigel Militaru for suggesting the
problem and for his guidance when writing this paper and also the
referee for helpful comments, some of which led to a
simplification of the exposition.


\end{document}